\pgfplotsset{compat=newest}
\newtheorem{theorem}{Theorem}[section]
\newtheorem{lemma}[theorem]{Lemma}
\newcommand{\R}{\mathbb{R}}
\newcommand{\lowD}{\underline{\de_{t}}}
\newcommand{\lowDd}{\underline{\de_{t}^{2}}}
\newcommand{\lowDs}{\underline{\de_{s}}}
\newcommand{\e}{\varepsilon}
\newcommand{\C}{{\mathcal C}}
\newcommand{\Hau}{{\mathcal H}} 
\renewcommand{\dim}{\mathop{\rm dim}} 
\newcommand{\de}{\partial}
\newcommand{\spt}{\mathop{\mathrm{spt}}}
\newcommand{\cA}{\mathcal{A}}
\newcommand{\cI}{\mathcal{I}}
\newcommand{\cJ}{\mathcal J}
\newcommand{\Lip}{\mathrm{Lip}}
\definecolor{grey}{rgb}{.7,.7,.7}
\definecolor{evidGP}{rgb}{0,0,1}
\definecolor{evidG}{rgb}{1,0,0}
\author{Gian Paolo Leonardi}
\address{Dipartimento di Matematica, via Sommarive 14, IT-38123 Povo - Trento (Italy)}
\email{gianpaolo.leonardi@unitn.it}
\author{Giacomo Vianello}
\address{Dipartimento di Matematica Tullio Levi Civita, Via Trieste 63, IT-35121 Padova (Italy)}
\email{giacomo.vianello@unipd.it}
\thanks{G.P.Leonardi has been partially supported by: PRIN 2017TEXA3H ``Gradient flows, Optimal Transport and Metric Measure Structures''; PRIN 2022PJ9EFL ``Geometric Measure Theory: Structure of Singular Measures, Regularity Theory and Applications in the Calculus of Variations'' (financed by European Union - Next Generation EU, Mission 4, Component 2 - CUP:E53D23005860006); Grant PID2020-118180GB-I00 ``Geometric Variational Problems''. Giacomo Vianello has been supported by GNAMPA (INdAM) Project 2023: ``Esistenza e propriet\`a fini di forme ottime''; GNAMPA (INdAM) Project 2025: ``Structures of sub-Riemannian hypersurfaces in Heisenberg groups''.}
\subjclass[2020]{Primary: 49Q05. Secondary: 49Q10}
\keywords{perimeter, almost-minimizers, stability, capillarity}
\title[Stability of axial free-boundary hyperplanes...]{Stability of axial free-boundary hyperplanes in circular cones}
\begin{document}

\begin{abstract}
	Given an axially-symmetric, $(n+1)$-dimensional convex cone $\Omega\subset \R^{n+1}$, we study the stability of the free-boundary minimal surface $\Sigma$ obtained by intersecting $\Omega$ with a $n$-plane that contains the axis of $\Omega$. In the case $n=2$, $\Sigma$ is always unstable, as a special case of the vertex-skipping property that we recently proved in another article. Conversely, as soon as $n\ge 3$ and $\Omega$ has a sufficiently large aperture (depending on the dimension $n$), we show that $\Sigma$ is strictly stable. For our stability analysis, we introduce a Lipschitz flow $\Sigma_{t}[f]$ of deformations of $\Sigma$ associated with a compactly-supported, scalar deformation field $f$, which satisfies the key property $\de \Sigma_{t}[f] \subset \de\Omega$ for all $t\in \R$. Then, we compute the lower-right second variation of the area of $\Sigma$ along the flow, and ultimately show that it is positive by exploiting its connection with a functional inequality studied in the context of reaction-diffusion problems. 
\end{abstract}
\maketitle

\section{Introduction}
One of the most intriguing aspects of the theory of free-boundary minimal surfaces, and more generally of capillary surfaces, is their behaviour near the boundary of their container. A well-established regularity theory is known to hold when the free boundary of a minimal surface $\Sigma$ meets a regular point of the boundary of its container $\Omega$, see \cite{DePhilippisMaggi2015,GruterJost1986allard}; in this case the $90$-degree contact-angle law holds true in a classical sense. More generally, Young's law for capillary surfaces \cite{young1805iii,DePhilippisMaggi2015} holds. Conversely, the behaviour of $\Sigma$ near a generic point of $\de\Sigma \cap \de^{s}\Omega$, where $\de^{s}\Omega$ denotes the set of singular points of $\de\Omega$, is much less known. Few exceptions are represented by the studies on free-boundary minimal surfaces in $3$-dimensional wedge domains, mainly due to Hildebrandt-Sauvigny \cite{hildebrandt1999wedge4,hildebrandt1999wedge3,hildebrandt1997wedge2,hildebrandt1997wedge1}, and by the analogous studies pursued in the context of capillary surfaces, see e.g. \cite{Fin86book,Lancaster2012} and the references therein. 

A few years ago, Edelen-Li \cite{EdelenLi2022} extended the existing regularity theory by proving a very general $\e$-regularity theorem for free-boundary integral varifolds with bounded mean curvature in locally-almost-polyhedral domains. Subsequently, with the aim of pushing the regularity theory forward and beyond the locally almost-polyhedral class, in a couple of recent works we have shown some sort of complementary results in the framework of sets of finite perimeter. The first result is a free-boundary monotonicity formula for perimeter almost-minimizers in a non-smooth domain which satisfies a so-called \textit{visibility condition} \cite{LeoVia2-2024}. The second is a \textit{vertex-skipping property} for almost-minimal boundaries in convex sets \cite{LeoVia1-2024}, which can be coupled with \cite{EdelenLi2022} and leads to a full, free-boundary regularity result for $2$-dimensional integral varifolds with bounded mean curvature in locally almost-polyhedral $3d$-domains.

Blow-up techniques are among the main tools used to retrieve information on the boundary properties of $\Sigma$, such as local structure, density properties, and ultimately regularity. Relying on boundary monotonicity formulas (see, e.g., \cite{LeoVia2-2024}), one can blow-up an almost-minimal surface at one of its free-boundary points, obtaining free-boundary cones inside conical domains. This represents the free-boundary counterpart of classical monotonicity and blow-up results that hold at interior points \cite{Giu84book,maggi2012sets}. By relying on such results, we have shown in \cite{LeoVia1-2024} that the internal boundary $\Sigma$ of a perimeter almost-minimizer in a convex set $\Omega\subset \R^{3}$ avoids all vertices of $\de\Omega$. Here, by vertex we mean a point $x\in \de\Omega$ such that the tangent cone to $\Omega$ at $x$ does not contain full lines. It is then natural to ask whether this \textit{vertex-skipping property} is a special feature of the $3$-dimensional setting, or if instead it holds in any dimension. We remark that, in dimension $2$, the property is trivially satisfied and, for this reason, this elementary case will not be discussed. 

Here, we consider a $n$-plane in $\R^{n+1}$, $n\ge 2$, containing the axis of a convex circular $(n+1)$-cone, and study the stability of the free-boundary minimal surface obtained by intersecting the plane with the cone. More precisely, we take $0<\lambda<\infty$, then we let $x' = (x_{1},\dots,x_{n-1})$ and set
\begin{equation*}
	\Omega_{\lambda} := \left \{ (x',x_{n},x_{n+1}) \in \R^{n+1} \, : \, x_{n} > \omega_{\lambda}(x',x_{n+1}) \right \} \, ,
\end{equation*}
where $\omega_{\lambda}(x',x_{n+1}) = \lambda \sqrt{|x'|^{2} + x_{n+1}^2}$. For future reference, we call $\lambda$ the aperture parameter of the cone, while the angle $\alpha_{\lambda} \in (0,\pi)$ such that $\lambda = \cot (\alpha_{\lambda}/2)$ will be referred to as the \textit{aperture} of the cone. 
Then, we define
\[
\Sigma = \{x\in \Omega_{\lambda}:\ x_{n+1} = 0\}\,.
\]
Section \ref{sec:main} contains the main results of the paper. Specifically,  Theorem \ref{thm:d+<0} shows that, when $n=2$, $\Sigma$ is unstable in $\Omega_{\lambda}$ for every $0 < \lambda < \infty$. We highlight that this is exactly the behaviour one should expect according to the vertex-skipping property proved in \cite{LeoVia1-2024}. More interestingly, Theorem \ref{thm:stab4} establishes that, when $n\ge3$, there exists a threshold aperture parameter $0 < \lambda^{\ast} < \infty$ depending on the dimension $n$, such that, for all $0<\lambda\le \lambda^{*}$ (i.e, for all apertures $\alpha \in [\alpha_{\lambda^{*}},\pi)$), the $n$-plane $\Sigma$ is a strictly stable free-boundary minimal surface in $\Omega_{\lambda}$. The proof of this theorem relies on two main tools. The first is the construction of a suitable Lipschitz flow of deformations that preserves the free-boundary condition and allows a direct writing of first and second variation (in)equalities, see Section \ref{sec:setting}. The second is a variational inequality, called \textit{Kato's inequality}, which is the key to show stability under appropriate conditions on the dimension $n$ and the aperture of $\Omega_{\lambda}$, see Lemma \ref{lem:Kato}. We point out that such inequality emerges in fairly different contexts, from spectral theory \cite{Herbst1977} to reaction-diffusion problems \cite{DavilaDupaigneMontenegro2008}. 

The stability versus instability properties of $\Sigma$ exhibit a close relationship with analogous properties of minimal surfaces within cones. In particular, a result of Morgan \cite{morgan2002area} implies that the free-boundary $\de\Sigma$ is an area-minimizing $(n-1)$-surface in $\de\Omega_{\lambda}$ as soon as $n\ge 4$ and $\lambda$ is small enough. Similar results are also available for free-boundary problems of Alt-Caffarelli type \cite{Allen2017}, where the dimension $n=3$ is critical for the free-boundary to contain the vertex of $\Omega_{\lambda}$, again when $\lambda$ is small enough.

Going back to \cite{morgan2002area}, we note that its main result is based on a calibration argument that holds precisely when $n\ge 4$. For this reason, we expect that our strict stability result can be improved to a full area-minimizing result at least when $n\ge 4$. However, the same kind of analysis in the case $n=3$ seems a harder open problem, as it is not fully clear whether $\de \Sigma$ is unstable, or at least not area-minimizing, in $\de \Omega_{\lambda}$. Our Theorem \ref{thm:stab4} seems to support the conjecture that $\Sigma$ is locally area-minimizing also when $n = 3$. The issue of the area-minimization property when $n\ge 3$ will be addressed in a forthcoming work.
\bigskip


\section{Notations and basic constructions}
\label{sec:setting}
Let $n\ge 2$ be a given integer. Given $x\in \R^{n}$ and $t\in \R$, we define $\tilde{x} = (x,t) \in \R^{n+1}$ and $x'\in \R^{n-1}$ such that $x = (x',x_{n})$, with a slight abuse of notation. We identify $x$ with $(x,0)\in \R^{n+1}$ and $x'$ with $(x',0,0)\in \R^{n+1}$, whenever this does not create confusion. We denote by $\tilde{B}_r(\tilde{x})$ the open ball of radius $r > 0$ centered at $\tilde{x}$, and we set $\tilde{B}_r := \tilde{B}_r(0)$. Given a differentiable function $f:\R^{n+1}\to \R$, we denote the partial derivative of $f$ with respect to $x_{i}$ as $\de_i f$.

For any $1 \leq i \leq n+1$, we denote by $p_i$ the hyperplane of $\R^{n+1}$ of equation $x_i = 0$, and use the same notation for the orthogonal projection of $\R^{n+1}$ onto this hyperplane with a slight abuse of notation.

\subsection{Foliating a Lipschitz epigraph} 
Given a Lipschitz function $\omega = \omega(x',t) : \R^n \to \R$, we consider the Lipschitz epigraph 
\begin{equation} \label{eq:omegagraph}
	\Omega = \{ \tilde{x} = (x',x_{n},t) \in \R^{n+1} \, : \, x_{n} > \omega(x',t) \} \, .
\end{equation}
We also let $\Sigma := \Omega \cap p_{n+1}$ and set $\de\Sigma = \overline{\Sigma}\cap \de\Omega$.
Given $x=(x',x_{n}) \in \overline{\Sigma}$, we define the parametric curve $\Gamma_{x}:\R\to \R^{n+1}$ as
\begin{align} \label{eq:Gamma}
	\Gamma_{x}(t) & := (x',x_n + \omega(x',t) - \omega(x',0),t) 
\end{align}
and, with a slight abuse of notation, we identify $\Gamma_{x}$ with $\Gamma_{x}(\R)$. 

The next lemma collects some key properties of the family $\{\Gamma_{x}:\ x\in \overline\Sigma\}$.
\begin{lemma} \label{lem:propGamma}
	The family of parametric curves $\{\Gamma_{x}:\ x\in \overline\Sigma\}$ defines a foliation of $\overline{\Omega}$. More precisely, the following properties hold:
	\begin{itemize}
		\item[(i)] $\Gamma_{x}(0) = x$ and $\Gamma_{x} \subset \overline\Omega$, for all $x \in \overline{\Sigma}$;
		\item[(ii)] if $x\neq y \in \overline{\Sigma}$ then $\Gamma_{x}\cap \Gamma_y = \emptyset$;
		\item[(iii)] $\Gamma_{x}(t) \in \de\Omega$ for some $t\in \R$ if and only if $\Gamma_{x}\subset \de \Omega$.
		\item[(iv)] the map $(x,t)\mapsto \Gamma_{x}(t)$ is Lipschitz.
	\end{itemize}
\end{lemma}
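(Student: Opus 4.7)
The plan is to verify the four itemized properties by direct inspection of the defining formula
$\Gamma_{x}(t) = (x', x_{n} + \omega(x',t) - \omega(x',0), t)$,
and then to add a one-line coverage check in order to upgrade them to the foliation statement.

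For (i), setting $t = 0$ annihilates the correction term and gives $\Gamma_{x}(0) = x$ immediately. For the inclusion $\Gamma_{x} \subset \overline{\Omega}$ I would read off the $n$-th coordinate of $\Gamma_{x}(t)$ and note that the epigraph condition $x_{n} + \omega(x',t) - \omega(x',0) \ge \omega(x',t)$ simplifies to $x_{n} \ge \omega(x',0)$, which is exactly the meaning of $x \in \overline{\Sigma}$. For (ii), I would assume $\Gamma_{x}(t_{1}) = \Gamma_{y}(t_{2})$ and compare coordinates: the last entry forces $t_{1} = t_{2}$, the first $n-1$ entries force $x' = y'$, and then the $n$-th entry immediately forces $x_{n} = y_{n}$. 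For (iii), membership of $\Gamma_{x}(t)$ in $\partial \Omega$ amounts to $x_{n} + \omega(x',t) - \omega(x',0) = \omega(x',t)$, i.e., $x_{n} = \omega(x',0)$; since this condition is independent of $t$, either no point of $\Gamma_{x}$ lies on $\partial \Omega$ or the whole curve does.

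The Lipschitz estimate in (iv) is a direct computation from the defining formula together with the Lipschitz assumption on $\omega$, since only the $n$-th coordinate involves a difference of values of $\omega$. To conclude the foliation statement I would also check coverage: given $\tilde y = (y', y_{n}, t) \in \overline{\Omega}$, the point $x := (y', \, y_{n} - \omega(y',t) + \omega(y',0))$ belongs to $\overline{\Sigma}$ (since $y_{n} \ge \omega(y',t)$) and satisfies $\Gamma_{x}(t) = \tilde y$, so every point of $\overline{\Omega}$ lies on exactly one curve of the family. I do not foresee any real obstacle: the curves $\Gamma_{x}$ have been engineered so that the vertical deformation of the epigraph is absorbed into a pure shift of the $n$-th coordinate, which makes each of the claims essentially tautological.
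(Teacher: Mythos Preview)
Your proposal is correct and follows essentially the same approach as the paper: properties (i)--(iv) are verified by direct inspection of the coordinates of $\Gamma_{x}(t)$ and the Lipschitz assumption on $\omega$, exactly as in the paper's proof. You additionally include the surjectivity check (coverage of $\overline{\Omega}$), which the paper leaves implicit but which is indeed needed to justify the word ``foliation'' in the statement.
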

\begin{proof}
	Property (i) directly follows from the definition of $\Gamma_{x}$ and by observing that $x \in \overline{\Sigma}$ implies $x_n \geq \omega(x',0)$, hence
	\begin{equation*}
		x_n + \omega(x',t) - \omega(x',0) \geq \omega(x',t) \, .
	\end{equation*}
	Then, concerning property (ii), if $\Gamma_x(t) = \Gamma_y(u)$ for some $x,y\in \overline\Sigma$ and some $t,u\in \R$, by definition we must have $t=u$, $x'=y'$, and 
	\begin{equation*}
		x_{n} + (\omega(x',t) - \omega(x',0)) = y_{n} + (\omega(y',u) - \omega(y',0)) \,,
	\end{equation*}
	which implies $x_{n}=y_{n}$ and hence $x=y$, which proves (ii). The less obvious implication of (iii) is the only if part: if $\Gamma_x(t) \in \de \Omega$ for some $t\in \R$, then 
	\[ 
	x_{n} + \omega(x',t)) - \omega(x',0)) = \omega(x',t)\,,
	\]
	i.e., $x_{n} = \omega(x',0)$. Consequently, 
	\[
	\Gamma_{x}(u) = (x',\omega(x',0) + \omega(x',u)-\omega(x',0), u) = (x',\omega(x',u), u) \in \de\Omega
	\]
	for all $u\in \R$. Finally, the proof of (iv) easily follows from the estimate
	\begin{align*}
		|\Gamma_{x}(t) - \Gamma_{y}(u)| &\le |x'-y'| + |x_{n}-y_{n}| + |\omega(x',t) - \omega(y',u)| +|\omega(x',0) +\omega(y',0)| + |t-u|\\
		&\le (1+2\Lip(\omega))(|x'-y'| + |x_{n}-y_{n}|+ |t-u|).
	\end{align*} 
\end{proof}

\subsection{A flow of compact deformations for $\Sigma$.}
Let now $f:\overline{\Sigma}\to \R$ be a Lipschitz function whose support $\spt(f)$ is compact in $\overline{\Sigma}$. We define the \emph{flow associated to $f$} as
\begin{equation} \label{eq:Phi}
	\Phi[f](x,t) := \Gamma_{x}(t f(x))
\end{equation}
for all $(x,t) \in \overline{\Sigma}\times \R$. The map $\Phi[f]$ satisfies $\Phi[f](x,t) = x$ and, thanks to Lemma \ref{lem:propGamma}, it is Lipschitz and one-to-one. When $\Phi[f](\cdot,t)$ is differentiable at $x$ (which is true for almost all $x\in \Sigma$), we denote by 
\[
D \Phi[f](x,t) = \left(\de_{1}\Phi[f](x,t),\dots,\de_{n}\Phi[f](x,t)\right)
\]
the Jacobian matrix of $\Phi[f](\cdot,t)$ at $x$, and by $D \Phi[f](x,t)^{T}$ its transpose. We also define
\[
S[f](x,t) := D \Phi[f](x,t)^T \cdot D \Phi[f](x,t)
\]
and
\[
J[f](x,t) := \sqrt{\det(S[f](x,t))} \,,
\]
and note for future reference that
\begin{equation} \label{eq:idJA}
	J[f](x,0) = 1\quad \forall\,x\in \Sigma\, .
\end{equation}
Then, we define 
\[
\Sigma_{t}[f] = \Phi[f](\Sigma,t)
\]
and
\begin{equation} \label{eq:area_Jac}
	A[f](t) := \Hau^{n}(\Phi[f](\Sigma \cap \spt(f),t)) = \int_{\spt(f)} J[f](x,t) \, dx\,,
\end{equation}
where the second identity follows from the Area Formula (see, e.g., \cite[Theorem 2.91]{AmbrosioFuscoPallara2000}). The function $A[f](t)$ represents the area of the compact portion of the hyperplane $\Sigma$ deformed via the flow map $\Phi[f](\cdot,t)$. The minimality/stability of $\Sigma$ is related to the asymptotic properties of $A[f](t)$ when $t\to 0$. In general, we cannot expect that $A[f](t)$ is (twice) differentiable at $t=0$, hence it will not be possible to compute the first and second variations of the area in a classical sense. However, we can test the local minimality of $\Sigma$ by taking lower right variations, i.e., 
\[
\lowD A[f](0^{+}) := \liminf_{t\to 0^{+}}\frac{A[f](t) - A[f](0)}{t}
\]
and, assuming $\lowD A[f](0^{+})=0$,
\[
\lowDd A[f](0^{+}) := 2 \liminf_{t\to 0^{+}}\frac{A[f](t) - A[f](0)}{t^{2}}\,.
\]
Therefore, a first-order necessary condition for local minimality is 
\begin{equation}\label{eq:FONC}
	\lowD A[f](0^{+}) \ge 0\,,\qquad \forall\, f\in C^{0,1}_{c}(\overline\Sigma)\,,
\end{equation}
with strict inequality only if the local minimality is strict. 
Then, a second-order necessary condition is 
\begin{equation}\label{eq:SONC}
	\lowD A[f](0^{+})=0\ \implies\ \lowDd A[f](0^{+}) \ge 0\,,
\end{equation}
for all $f\in C^{0,1}_{c}(\overline\Sigma)$.
\bigskip

\section{Stability of $\Sigma$ in $\Omega_{\lambda}$}
\label{sec:main}
We recall that $\Omega_{\lambda}$ is the $(n+1)$-dimensional circular cone defined as the epigraph of the function
\begin{equation*}
	\omega_{\lambda}(x',x_{n+1}) := \lambda \, \sqrt{|x'|^2 + x_{n+1}^2} \, ,
\end{equation*}
with $\lambda>0$ a fixed parameter defining the aperture of the cone. In this case, for all $f \in C^{0,1}_c(\overline{\Sigma})$, the one-parameter flow associated with $f$ is given by
\begin{align*}
	\Phi[f](x,t) &:= \left(x' \, , \, \omega_{\lambda}(x',tf(x)) + x_{n} - \omega_{\lambda}(x',0) \, , \, t \, f(x) \right)\\ 
	&= \left(x' \, , \, \lambda \, \sqrt{|x'|^{2} + t^2 f(x)^{2}} + x_{n} - \lambda |x'| \, , \, t \, f(x) \right) \, .
\end{align*}
It is immediate to check that $\Phi[f](x,t)$ is differentiable on the set
\begin{equation*}
	\Delta_f := \{ x \in \spt(f) \, : \, x \neq 0 \, , \, f \text{ is differentiable at $x$} \} \cup \left( \Sigma \setminus \spt(f) \right) \, .
\end{equation*}
In particular, since $f$ is Lipschitz, we infer that
\begin{equation*}
	\Hau^n(\Sigma \setminus \Delta_{f}) = 0 \, .
\end{equation*}
For almost all $x \in \Delta_f$ and for $1 \leq i \leq n-1$, we have
\begin{align} \label{eq:de_i}
	\de_i \Phi[f](x,t) & = e_i + \left[ t \, \de_i f(x) \, \de_{n+1}\omega_{\lambda}(x',t f(x)) + \de_i \omega_{\lambda} (x',t f(x)) - \de_i \omega_{\lambda}(x',0) \right] e_{n} + t \, \de_i f (x) e_{n+1} \nonumber \\
	& = e_i + \lambda \left( \dfrac{t^2 f(x) \, \de_i f (x)}{\sqrt{|x'|^2 + t^2 f(x)^2}} + \dfrac{x_i}{\sqrt{|x'|^2 + t^2 f(x)^2}} - \dfrac{x_i}{|x|} \right) e_{n} + t \, \de_i f(x) e_{n+1}
\end{align}
while, if $i = n$,
\begin{align} \label{eq:de_n}
	\de_n \Phi[f](x,t) & = \left( t \, \de_n f (x) \, \de_{n+1}\omega_{\lambda} (x',tf(x)) + 1 \right) e_{n} + t \, \de_n f (x) e_{n+1} \nonumber \\
	& = \left( \lambda \dfrac{t^2 f(x) \, \de_{n}f (x)}{\sqrt{|x'|^2 + t^2 f(x)^2}} + 1 \right) e_{n} + t \, \de_n f(x) e_{n+1} \, .
\end{align}
We introduce the following notations:
\begin{align} \label{eq:A_i}
	& \alpha_i(x,t) := 
	\begin{cases}
		\lambda \left( \dfrac{t^2 f(x) \, \de_i f (x)}{\sqrt{|x'|^2 + t^2 f(x)^2}} + \dfrac{x_i}{\sqrt{|x'|^2 + t^2 f(x)^2}} - \dfrac{x_i}{|x|} \right) & \text{if $i = 1,...,n-1$} \\[10pt]
		\lambda \, \dfrac{t^2 f(x) \de_{n}f(x)}{\sqrt{|x'|^2 + t^2 f(x)^2}} & \text{if $i = n \, ,$}
	\end{cases}
	\\[10pt] \label{eq:B_i}
	& \beta_{i}(x,t) := t \, \de_i f (x) \, , \qquad \forall i = 1,...,n \, .
\end{align}
We thus have
\begin{equation} \label{eq:expr_de}
	\de_i \Phi[f](x,t) = e_i + \alpha_i(x,t) \, e_{n} + \beta_i(x,t) \, e_{n+1} \, , \qquad \forall i = 1,...,n \, .
\end{equation}
In the next lemma we compute the Jacobian $J[f](x,t)$ of the flow map.
\begin{lemma} \label{lem:J^2}
	Given $f \in \C^{0,1}_c(\overline{\Sigma})$, for all $t \in \R$, we have
	\begin{equation} \label{eq:J^2}
		J[f](x,t)^2 = 1 + t^2 \left( |\nabla f(x)|^2 + \dfrac{2 \, \lambda \, f(x) \, \de_{n} f (x)}{\sqrt{|x'|^2 + t^2 f(x)^2 }} \right) + R[f](x,t) \, ,
	\end{equation}
	where $R[f](x,t)$ is such that
	\begin{equation} \label{eq:R_tprop}
		\spt(R[f](t,\cdot)) \subset \spt(f) \, , \qquad \lim_{t \rightarrow 0} \dfrac{R[f](x,t)}{t^2} = 0 \, , \, \, \text{a.e. in $\Sigma \, ,$} \qquad \left| \dfrac{R[f](x,t)}{t^2} \right| \leq C \, ,
	\end{equation}
	where $C \geq 0$ is a constant depending on $\Lip(f)$ only.
\end{lemma}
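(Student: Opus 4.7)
The plan is to compute $J[f]^{2}=\det\bigl(D\Phi[f]^{T}D\Phi[f]\bigr)$ directly via the Cauchy--Binet formula, capitalizing on the sparsity of $D\Phi[f]$ visible from \eqref{eq:expr_de}. Since $D\Phi[f]$ is an $(n+1)\times n$ matrix, Cauchy--Binet gives
\[
J[f](x,t)^{2}=\sum_{k=1}^{n+1}M_{k}(x,t)^{2},
\]
where $M_{k}$ denotes the $n\times n$ minor obtained by deleting the $k$-th row. Reading off the rows of $D\Phi[f]$ from \eqref{eq:expr_de}, rows $1,\dots,n-1$ coincide with $e_{1}^{T},\dots,e_{n-1}^{T}$, the $n$-th row is $(\alpha_{1},\dots,\alpha_{n-1},1+\alpha_{n})$, and the $(n+1)$-th row is $(\beta_{1},\dots,\beta_{n})$. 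A Laplace expansion along the $n-1$ identity-like rows immediately collapses each minor to a small explicit expression: $M_{n+1}=1+\alpha_{n}$, $M_{n}=\beta_{n}$, and $M_{k}=\pm\bigl[(1+\alpha_{n})\beta_{k}-\alpha_{k}\beta_{n}\bigr]$ for $k=1,\dots,n-1$.

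I would then expand each $M_{k}^{2}$ and regroup. The square $(1+\alpha_{n})^{2}=1+2\alpha_{n}+\alpha_{n}^{2}$ accounts for the constant $1$ and, via \eqref{eq:A_i}, for the leading contribution $2\alpha_{n}=\tfrac{2\lambda t^{2}f\,\de_{n}f}{\sqrt{|x'|^{2}+t^{2}f^{2}}}$. Keeping only the factor $1$ from $(1+\alpha_{n})^{2}\beta_{k}^{2}$ and adding $\beta_{n}^{2}$ reproduces $\sum_{k=1}^{n}\beta_{k}^{2}=t^{2}|\nabla f|^{2}$. Every other summand---namely $\alpha_{n}^{2}$, the cross products $-2(1+\alpha_{n})\alpha_{k}\beta_{k}\beta_{n}$, the pieces $\alpha_{k}^{2}\beta_{n}^{2}$, and the corrections $(2\alpha_{n}+\alpha_{n}^{2})\beta_{k}^{2}$ for $k<n$---is declared to be $R[f](x,t)$, giving the formula \eqref{eq:J^2}.

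To verify the three claims in \eqref{eq:R_tprop}, I would inspect each summand of $R[f]$. The support inclusion is immediate: outside $\spt(f)$, $f$ and $\nabla f$ vanish, hence so do all $\alpha_{i}$ and $\beta_{i}$. For the uniform bound on $R[f]/t^{2}$ (on any bounded range of $t$), the trivial estimate $|\beta_{i}|\le|t|\Lip(f)$ together with $|\alpha_{i}(x,t)|\le C(\lambda,\Lip(f),\|f\|_{\infty})(1+|t|)$---obtained from $\sqrt{|x'|^{2}+t^{2}f^{2}}\ge|t||f|$ and $|x_{i}|\le|x'|$---shows that each summand in $R[f]$ carries at least two factors of $t$ and thus remains bounded after division. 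Finally, the pointwise limit $R[f](x,t)/t^{2}\to 0$ as $t\to 0$ uses the fact that $|x'|>0$ for a.e.\ $x\in\Sigma$: then $\alpha_{i}(x,t)\to 0$ and every contribution to $R[f]/t^{2}$ vanishes in the limit.

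The main obstacle I anticipate is the uniform pointwise control of $\alpha_{i}(x,t)$ for $i<n$ in the regime $|x'|\to 0$: the two summands in \eqref{eq:A_i} are individually singular, yet their difference must be bounded. The cancellation is made explicit by the identity $\tfrac{1}{\sqrt{|x'|^{2}+t^{2}f^{2}}}-\tfrac{1}{|x'|}=\tfrac{-t^{2}f^{2}}{|x'|\sqrt{|x'|^{2}+t^{2}f^{2}}\bigl(|x'|+\sqrt{|x'|^{2}+t^{2}f^{2}}\bigr)}$, after which $|x_{i}|\le|x'|$ and $\sqrt{|x'|^{2}+t^{2}f^{2}}\ge|t||f|$ close the argument. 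By contrast, the algebraic expansion of the second paragraph, though lengthy, is entirely routine bookkeeping.
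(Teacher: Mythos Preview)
Your proposal is correct and follows essentially the same route as the paper. The Cauchy--Binet expansion you use is the coordinate form of the paper's wedge-product computation $J[f]^{2}=|v_{1}\wedge\dots\wedge v_{n}|^{2}$ carried out in Lemma~\ref{lem:det}; both produce the identical expression \eqref{eq:det}, and your subsequent splitting into $1+2\alpha_{n}+\sum\beta_{i}^{2}$ plus remainder coincides with the paper's definition of $R[f]$. One point worth noting: you are in fact more explicit than the paper about the uniform bound on $\alpha_{i}$ for $i<n$ (the paper's \eqref{eq:propbound} records only bounds on $\alpha_{n}/t$ and $\beta_{i}/t$), and you correctly isolate and resolve the apparent singularity at $|x'|=0$ via the cancellation identity---this is exactly the missing step needed to justify the claimed bound on $R[f]/t^{2}$.
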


\begin{proof}
	By Lemma \ref{lem:det} below, we have
	\begin{equation} \label{eq:det}
		J[f]^2 = \left( 1 + \alpha_n \right)^2 \left( 1 + \sum_{i=1}^{n-1} \beta_i^2 \right) + \beta_n^2 \left( 1 + \sum_{i=1}^{n-1} \alpha_i^2 \right) - 2 \, (1 + \alpha_n) \beta_n \sum_{i=1}^{n-1} \alpha_i \beta_i \, .
	\end{equation}
	where $\alpha_i,\beta_{i}$ are defined in \eqref{eq:A_i} and \eqref{eq:B_i}.
	We observe that, for every $1 \leq i \leq n-1$,
	\begin{equation} \label{eq:propA_i}
		\lim_{t \to 0^+} \alpha_i(t) = 0 \qquad \text{for $x \neq 0$.}
	\end{equation}
	In addition,
	\begin{equation} \label{eq:proplim}
		\lim_{t \to 0} \dfrac{\alpha_n[f](t)}{t} = 0 \quad \text{for $x \neq 0$,} \qquad \lim_{t \to 0} \beta_i[f](t) = 0 \quad \text{for all $x$,}
	\end{equation}
	and
	\begin{equation} \label{eq:propbound}
		\dfrac{\|\alpha_n[f](t)\|_{L^{\infty}(\Sigma)}}{t} \leq \lambda \|\de_n f\|_{L^{\infty}(\Sigma)} \, , \qquad \dfrac{\|\beta_i[f](t)\|_{L^{\infty}(\Sigma)}}{t} \leq \|\de_i f\|_{L^{\infty}(\Sigma)} \, .
	\end{equation}
	Let us define
	\begin{equation*}
		R[f] := \alpha_n^2 \left( 1 + \sum_{i=1}^{n-1} \beta_i^2 \right) + 2 \alpha_n \sum_{i=1}^{n-1} \beta_i^2 + \beta_n^2 \sum_{i=1}^{n-1} \alpha_i^2 - 2 \, (1 + \alpha_n) \beta_n \sum_{i=1}^{n-1} \alpha_i \beta_i \, .
	\end{equation*}
	By the definition of $\alpha_i$, $\beta_i$ provided in \eqref{eq:A_i}, it is evident that $\alpha_i$ and $\beta_i$ vanish as $f$ vanishes, and this implies that $\spt(R[f](t,\cdot)) \subset \spt(f)$. Owing to \eqref{eq:propA_i}, \eqref{eq:proplim}, we infer that
	\begin{equation*}
		\lim_{t \rightarrow 0} \dfrac{R[f](x,t)}{t^2} = 0 \, , \, \, \text{a.e. in $\Sigma \, ,$}
	\end{equation*}
	while \eqref{eq:propbound} yields
	\begin{equation*}
		\left| \dfrac{R[f](x,t)}{t^2} \right| \leq C \, , \qquad \text{$C$ constant depending on $\Lip(f)$.}
	\end{equation*}
	This proves the validity of \eqref{eq:R_tprop}. On the other hand
	\begin{align*}
		J[f]^2 - R[f] \ =\ 1 + 2 \alpha_n + \sum_{i=1}^{n} \beta_i^2 
		\ =\ 1 + t^2 \left( |\nabla f(x)|^2 + \dfrac{2 \, \lambda \, f(x) \, \de_{n} f (x)}{\sqrt{|x'|^2 + t^2 f(x)^2 }} \right) \, ,
	\end{align*}
	that is precisely what we claimed.
\end{proof}

Before proving our technical Lemma \ref{lem:det}, we introduce
some convenient notation. We let $\mu = (\mu_{1},\dots,\mu_{k})$ be a multi-index with $\mu_{\ell}\in \{1,\dots,n+1\}$, then we set $e_{\mu} = e_{\mu_{1}}\wedge\dots\wedge e_{\mu_{k}}$ and, given $i\in \{1,\dots,k\}$ and $j\in \{1,\dots,n+1\}$, we define
\[
e_{\mu,i}^{j} = e_{\mu_{1}}\wedge\dots\wedge e_{\mu_{i-1}}\wedge e_{j}\wedge e_{\mu_{i+1}}\wedge\dots\wedge e_{\mu_{k}} \, .
\]
We have the following
\begin{lemma} \label{lem:det}
	Assume that $\de_i \Phi[f]$ are given by \eqref{eq:expr_de}, for $1 \leq i \leq n$. Then $J[f]^2$ is expressed by formula \eqref{eq:det}.
\end{lemma}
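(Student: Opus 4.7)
The plan is to compute $J[f]^2 = \|\partial_1 \Phi \wedge \cdots \wedge \partial_n \Phi\|^2$ by expanding the wedge product directly in the standard orthonormal basis of $\Lambda^n \R^{n+1}$. The key observation is that, by \eqref{eq:expr_de}, each $\partial_i \Phi$ involves at most three basis vectors $e_i, e_n, e_{n+1}$, and in particular $\partial_n \Phi = (1+\alpha_n) e_n + \beta_n e_{n+1}$ involves only $e_n$ and $e_{n+1}$. This sparsity is what will keep the calculation manageable.

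By multilinearity, $\bigwedge_{i=1}^n \partial_i \Phi[f]$ splits as a sum of simple wedges, one per choice of a basis vector from each factor. A nonzero simple wedge requires distinct basis vectors in all $n$ slots, so I would organize the expansion according to how many indices $i \le n-1$ ``swap out'' their $e_i$ for either $e_n$ or $e_{n+1}$. Only two possibilities survive: either \emph{no index swaps}, in which case the first $n-1$ factors contribute $e_1, \ldots, e_{n-1}$ and $\partial_n \Phi$ contributes $(1+\alpha_n) e_n$ or $\beta_n e_{n+1}$; or \emph{exactly one index $k \le n-1$ swaps}, picking $\alpha_k e_n$ or $\beta_k e_{n+1}$, forcing $\partial_n \Phi$ to supply the complementary basis vector. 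Any double swap would repeat one of $e_n, e_{n+1}$ and thus vanish.

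Labelling each basis element of $\Lambda^n \R^{n+1}$ by the omitted index $j\in\{1,\ldots,n+1\}$, a direct sign count (moving the misplaced $e_n$ back to the second-to-last position costs $n-1-k$ adjacent transpositions, and moving $e_{n+1}$ back to the last position costs $n-k$ transpositions) yields the coefficients $1+\alpha_n$ for the element omitting $e_{n+1}$, $\beta_n$ for the one omitting $e_n$, and $(-1)^{n-1-k}\bigl[\alpha_k \beta_n - \beta_k(1+\alpha_n)\bigr]$ for the one omitting $e_k$, $k\le n-1$. Summing the squares of these coefficients gives
\[
J[f]^2 = (1+\alpha_n)^2 + \beta_n^2 + \sum_{k=1}^{n-1} \bigl[\alpha_k \beta_n - \beta_k(1+\alpha_n)\bigr]^2,
\]
and expanding the square inside the sum and regrouping terms produces exactly the right-hand side of \eqref{eq:det}.

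The main bookkeeping hurdle is the sign tracking in the one-swap case. Fortunately the signs only enter through an overall $(-1)^{n-1-k}$ prefactor that disappears upon squaring, so the final formula depends only on the symmetric combination $\alpha_k \beta_n - \beta_k(1+\alpha_n)$. This explains the clean form of \eqref{eq:det} and, in particular, why the cross term $-2(1+\alpha_n)\beta_n \sum_{k=1}^{n-1}\alpha_k\beta_k$ has the sign it does.
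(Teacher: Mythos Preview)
Your proposal is correct and follows essentially the same approach as the paper: both compute $J[f]^2$ as the squared norm of $\de_1\Phi\wedge\cdots\wedge\de_n\Phi$, expand the wedge product in the orthonormal basis of $\Lambda^n\R^{n+1}$ (indexed by the omitted coordinate), and read off the coefficients $(1+\alpha_n)$, $\beta_n$, and $\alpha_k\beta_n-(1+\alpha_n)\beta_k$ for $k\le n-1$. Your explicit ``no swap / one swap / two swaps'' combinatorics and sign bookkeeping simply make visible the case analysis that the paper compresses into its notation $e_\mu$, $e_{\mu,i}^j$.
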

\begin{proof}
	For $1 \leq i \leq n$, let $v_i := \de_i \Phi[f]$. Well-known results of multi-linear algebra \cite[Chapter 2]{Shafarevich_Remizov_book} guarantee that
	\begin{equation} \label{eq:normwedge}
		J[f]^2 = |v_1 \wedge \dots \wedge v_n|^2 \, ,
	\end{equation}
	where $|\cdot|$ denotes the standard Euclidean norm for multivectors. 
	Let us set $\mu = (1,2,\dots,n-1)$. 
	By the definition of $v_n$ and the fact that $e^{n}_{\mu,i} \wedge e_{n+1} = - e^{n+1}_{\mu,i} \wedge e_n$, we get
	\begin{align} \label{eq:wedge}
		v_1 \wedge ... \wedge v_n & =  \left( 1 + \alpha_n \right) e_{\mu}\wedge e_{n} + \beta_n \, e_{\mu}\wedge e_{n+1} \, + \nonumber \\
		& \qquad \qquad + \beta_n \sum_{i=1}^{n-1} \alpha_i \, e^{n}_{\mu,i} \wedge e_{n+1} + \left( 1 + \alpha_n \right) \sum_{i=1}^{n-1} \beta_i \, e^{n+1}_{\mu,i} \wedge e_n \\
		& = \left( 1 + \alpha_n \right) e_{\mu}\wedge e_{n} + \beta_n \, e_{\mu}\wedge e_{n+1} + \sum_{i=1}^{n-1} \left[ \beta_n \alpha_i - \left( 1 + \alpha_n \right) \beta_i \right] \, e^{n}_{\mu,i} \wedge e_{n+1} \, . 
		\nonumber
	\end{align}
	By the orthonormality of the multivectors $e_{\mu}\wedge e_{n}, e_{\mu}\wedge e_{n+1}$, and $e^{n}_{\mu,i} \wedge e_{n+1}$ for $i=1,\dots,n-1$, we obtain
	\begin{equation*}
		|v_1 \wedge ... \wedge v_n|^2 = \left( 1 + \alpha_n \right)^2 \left( 1 + \sum_{i=1}^{n-1} \beta_i^2 \right) + \beta_n^2 \left( 1 + \sum_{i=1}^{n-1} \alpha_i^2 \right) - 2 \, (1 + \alpha_n) \beta_n \sum_{i=1}^{n-1} \alpha_i \beta_i \, .
	\end{equation*}
	Then \eqref{eq:det} immediately follows from \eqref{eq:normwedge}.
\end{proof}

We observe that \eqref{eq:J^2} explicitly depends on $t^{2}$. This implies the vanishing of the first (lower) variation of the area, $\lowD A[f](0^{+}) = 0$, and the fact that the second variation can be computed as the first variation of the area with respect to the parameter $s= t^{2}$. To this end, given $s > 0$ we conveniently set $\cA[f](s) = A[f](\sqrt s)$ and $\cJ[f](x,s) = J[f](x,\sqrt s)$. The following theorem holds.

\begin{theorem} \label{thm:d+}
	The following identity holds:
	\begin{equation} \label{eq:d+A}
		\lowDs \cA[f](0^{+}) \, = \, \dfrac{1}{2} \left( \int_{\Sigma} |\nabla f(x)|^2 \, dx - \lambda \int_{\R^{n-1}} \dfrac{f(x',\lambda|x'|)^2}{|x'|} \, d x' \right) \, , \qquad \forall f \in C^{0,1}_c(\overline{\Sigma}) \, .
	\end{equation}
\end{theorem}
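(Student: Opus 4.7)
The plan is to combine the expansion \eqref{eq:J^2} with the elementary algebraic identity $\cJ-1=\tfrac12(\cJ^{2}-1)-\tfrac12(\cJ-1)^{2}$. Setting $s=t^{2}$ and writing $\cJ[f](x,s)^{2}-1 = s\,Q(x,s)+R[f](x,\sqrt{s})$, where
\[
Q(x,s) := |\nabla f(x)|^{2} + \frac{2\lambda\, f(x)\,\de_{n}f(x)}{\sqrt{|x'|^{2}+sf(x)^{2}}}\,,
\]
integrating over $\spt(f)$ and dividing by $s$ yields the decomposition
\[
\frac{\cA[f](s)-\cA[f](0)}{s} = \frac12\int_{\spt(f)}Q(x,s)\,dx + \frac{1}{2s}\int R\,dx - \frac{1}{2s}\int(\cJ[f]-1)^{2}\,dx\,.
\]
The strategy is to show that the two rightmost integrals vanish as $s\to 0^{+}$, while the first produces \eqref{eq:d+A} after an integration by parts in $x_{n}$.

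The vanishing of the first error term, $s^{-1}\int R\,dx\to 0$, is immediate from \eqref{eq:R_tprop} via dominated convergence. For the quadratic one I would combine $\cJ\ge 0$, which gives $(\cJ-1)^{2}\le (sQ+R)^{2}$, with the crude bound $|f|/\sqrt{|x'|^{2}+sf^{2}}\le s^{-1/2}$ on $\{f\ne 0\}$, to show that $(sQ+R)^{2}/s$ is uniformly bounded on the compact set $\spt(f)$ and tends to $0$ pointwise almost everywhere; a second application of dominated convergence then yields the vanishing.

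For the main term I would split off $\tfrac12\int_{\Sigma}|\nabla f|^{2}dx$ and treat the remaining piece $\lambda\int_{\Sigma} f\,\de_{n}f/\sqrt{|x'|^{2}+sf^{2}}\,dx$ through the key identity
\[
\frac{f\,\de_{n}f}{\sqrt{|x'|^{2}+sf^{2}}} = \frac{1}{s}\,\de_{n}\sqrt{|x'|^{2}+sf^{2}}\,,
\]
which collapses the singular integrand into a total $\de_{n}$-derivative. Fubini on the slices $\{x'=\mathrm{const}\}$---using compact support of $f$ to kill the contribution at $x_{n}=+\infty$ and the identification $\de\Sigma=\{x_{n}=\lambda|x'|\}$ at the other endpoint---followed by rationalization gives
\[
\int_{\Sigma}\frac{\lambda\, f\,\de_{n}f}{\sqrt{|x'|^{2}+sf^{2}}}\,dx = -\lambda\int_{\R^{n-1}}\frac{f(x',\lambda|x'|)^{2}}{|x'|+\sqrt{|x'|^{2}+sf(x',\lambda|x'|)^{2}}}\,dx'\,.
\]
The integrand on the right increases monotonically to $f(x',\lambda|x'|)^{2}/(2|x'|)$ as $s\to 0^{+}$, so monotone convergence yields the boundary contribution $-\tfrac{\lambda}{2}\int_{\R^{n-1}}f(x',\lambda|x'|)^{2}/|x'|\,dx'$ (the identity holding in $[-\infty,0]$); combined with the gradient piece this gives \eqref{eq:d+A}.

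The main obstacle is the non-integrability of the singular kernel $(|x'|^{2}+sf^{2})^{-1/2}$ near the axis $\{x'=0\}$, which blocks a direct pointwise passage to the limit inside the bulk integral of $Q$. The IBP trick---recognizing the singular bulk term as a total $\de_{n}$-derivative---is precisely the device that bypasses this obstruction, replacing the ill-behaved bulk integral with a boundary integral over $\de\Sigma$ whose $s\to 0^{+}$ behaviour is controlled cleanly by monotonicity.
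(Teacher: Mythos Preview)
Your proposal is correct and follows essentially the same route as the paper. In fact, your algebraic identity $\cJ-1=\tfrac12(\cJ^{2}-1)-\tfrac12(\cJ-1)^{2}$ is \emph{exactly} the paper's decomposition in disguise: the paper writes $\cJ-1 = \big(\sqrt{1+sG}-1-\tfrac{sG}{2}\big) + \tfrac{sG}{2}$ with $G=Q+R/s$, and since $\sqrt{1+sG}-1-\tfrac{sG}{2} = -\tfrac12(\sqrt{1+sG}-1)^{2} = -\tfrac12(\cJ-1)^{2}$, your three terms coincide with the paper's $\cI^{1}_{s}$ and the two pieces of $\cI^{2}_{s}$. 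The treatment of the main term --- recognizing $f\,\de_{n}f/\sqrt{|x'|^{2}+sf^{2}}$ as $s^{-1}\de_{n}\sqrt{|x'|^{2}+sf^{2}}$, integrating in $x_{n}$ over each slice, rationalizing, and passing to the limit by monotone (Beppo Levi) convergence --- is identical to the paper's. The only cosmetic difference is that the paper bounds $\cI^{1}_{s}$ via the explicit rational expression $\frac{-s^{2}G^{2}/4}{\sqrt{1+sG}+1+sG/2}$, whereas you use $|\cJ-1|\le|\cJ^{2}-1|$ (valid since $\cJ+1\ge 1$) together with the crude bound $|f|/\sqrt{|x'|^{2}+sf^{2}}\le s^{-1/2}$; both yield the same uniform $O(1)$ control on $(\cJ-1)^{2}/s$ and the same pointwise vanishing off $\{x'=0\}$, so dominated convergence applies either way.
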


\begin{proof}
	Owing to \eqref{eq:idJA}, we have $\cJ[f](0) \equiv 1$, hence
	\begin{align} \nonumber
		\lowDs \cA[f](0^{+}) &= \liminf_{s\to 0^{+}} \int_{\spt(f)} s^{-1} \big(\cJ[f](x,s) - \cJ[f](x,0)\big) d x \\\label{eq:1}
		&=  \liminf_{s\to 0^{+}}\int_{\spt(f)} s^{-1}\big(\cJ[f](x,s) - 1\big) \, dx \, .
	\end{align}
	Now, \eqref{eq:J^2} guarantees that
	\begin{equation*}
		\cJ[f](x,s)^2 = 1 + s \, G \, , \qquad \text{where}\quad G := \left( |\nabla f(x)|^2 + \dfrac{2 \, \lambda \, f(x) \, \de_{n} f(x)}{\sqrt{|x'|^2 + s f(x)^2}} + \dfrac{R[f](x,\sqrt{s})}{s} \right) \, .
	\end{equation*}
	Thus by \eqref{eq:1} we infer
	\begin{equation*}
		\int_{\spt(f)} s^{-1}(\cJ[f](x,s) - 1)\, dx = \cI^1_s + \cI^2_s \,,
	\end{equation*}
	where
	\begin{equation*}
		\cI^1_s := \dfrac{1}{s} \int_{\spt(f)} \sqrt{1 + s G} - \left( 1 + s \, \dfrac{G}{2} \right) dx \, , \qquad \cI^2_s := \int_{\spt(f)} \dfrac{G}{2} \, dx \, .
	\end{equation*}
	An elementary computation yields
	\begin{equation*}
		\sqrt{1 + s G} - \left( 1 + s \, \dfrac{G}{2} \right) = \dfrac{- s^2 G^2/4}{\sqrt{1 + s G} + \left( 1 + s \dfrac{G}{2} \right)} \, ,
	\end{equation*}
	consequently
	\begin{equation*}
		|\cI^1_s| \leq \dfrac{1}{4} \bigintss_{\spt(f)} \dfrac{s G^2}{\left| \sqrt{1 + s G} + \left( 1 + s \dfrac{G}{2} \right) \right|} dx \, .
	\end{equation*}
	Now it is immediate to observe that $G=0$ whenever $x \notin \spt(f)$, and, as $s\to 0^{+}$, 
	\begin{equation*}
		\sqrt{s} \, G \longrightarrow 0\quad \text{a.e. in $\Sigma$,} \qquad |\sqrt{s} \, G| \leq C \quad \text{for some $C > 0$ depending on $\Lip(f)$ only.}
	\end{equation*}
	By Dominated Convergence, we get
	\begin{equation*}
		\lim_{s \rightarrow 0^+} \cI^1_s = 0 \, .
	\end{equation*}
	Let us now study the limit as $s \rightarrow 0^+$ of
	\begin{align*}
		\cI^2_s & = \int_{\Sigma} \left(\dfrac{1}{2} \, |\nabla f(x)|^2 + \dfrac{\lambda \, f(x) \, \de_{n}f(x)}{\sqrt{|x'|^2 + s f(x)^2}} + \dfrac{1}{2} \, \dfrac{R[f](x,\sqrt{s})}{s}\right) \, d x \\
	\end{align*}
	By \eqref{eq:R_tprop} and Dominated Convergence, we have
	\begin{equation*}
		\int_{\Sigma} \dfrac{R[f](x,\sqrt{s})}{s} \, dx \longrightarrow 0 \, , \qquad \text{as $s \rightarrow 0^+ \, .$}
	\end{equation*}
	On the other hand, denoting by $g(x') := f(x',\lambda|x'|)^2$, an integration by parts yields
	\begin{align*}
		\int_{\Sigma} \dfrac{\lambda \, f(x) \, \de_{n}f(x)}{\sqrt{|x'|^2 + s f(x)^2}} \, d x & = \dfrac{\lambda}{s} \int_{\Sigma} \dfrac{s \, f(s) \, \de_{n}f(x)}{\sqrt{|x'|^2 + s f(x)^2}} \, d x \\
		& = \dfrac{\lambda}{s} \int_{\Sigma} \de_{n} \sqrt{|x'|^2 + s f(x)^2} \, dx \\
		& = \dfrac{\lambda}{s} \int_{\R^{n-1}} \left( \int_{\lambda |x'|}^{+ \infty} \de_{n} \sqrt{|x'|^2 + s f(x)^2} d x_{n} \right) dx' \\
		& = \dfrac{\lambda}{s} \int_{\R^{n-1}} |x'| - \sqrt{|x'|^2 + s g(x')} \, dx' \\
		& = - \lambda \int_{\R^{n-1}} \dfrac{g(x')}{|x'| + \sqrt{|x'|^2 + s g(x')}} \, d x'
	\end{align*}
	Now, the integrand
	\begin{equation*}
		\dfrac{g(x')}{|x'| + \sqrt{|x'|^2 + s g(x')}}
	\end{equation*}
	is positive and monotonically non-increasing in $s$, for any $x' \neq 0$. Then, by Beppo Levi's Theorem, we obtain
	\begin{equation*}
		\lim_{s \to 0^+} \int_{\R^{n-1}} \dfrac{g(x')}{|x'| + \sqrt{|x'|^2 + s g(x')}} \, d x' = \dfrac{1}{2} \int_{\R^{n-1}} \dfrac{g(x')}{|x'|} \, dx' \, .
	\end{equation*}
	This implies that
	\begin{equation*}
		\lim_{s \to 0^+} \cI^2_s = \dfrac{1}{2} \int_{\Sigma} |\nabla f(x)|^2 \, dx - \dfrac{\lambda}{2} \int_{\R^{n-1}} \dfrac{g(x')}{|x'|} \, dx'\, ,
	\end{equation*}
	and the proof is concluded.
\end{proof}


Theorem \ref{thm:d+} allows us to test in which cases $\Sigma$ is stable. Indeed, by \eqref{eq:d+A}, the first variation of the area vanishes, hence the stability condition can be stated in terms of the lower-right second variation:
\begin{equation*}
	\lowDd A[f](0^{+}) = 2\lowDs\cA[f](0^{+}) \geq 0\,,
\end{equation*}
which is equivalent to the functional inequality
\begin{equation}\label{eq:funcineq}
	\int_{\Sigma} |\nabla f(x)|^2 \, dx \geq \lambda \int_{\R^{n-1}} \dfrac{f(x',\lambda|x'|)^2}{|x'|} \, dx' \, .
\end{equation}
\subsection{The case $\dim \Sigma = 2$}

In this case, we see that $\Sigma$ is always unstable (we proved a more general version of this result in \cite{LeoVia1-2024}). 
\begin{theorem} \label{thm:d+<0}
	Let $n = 2$. Then, for all $f \in C^{0,1}_c(\overline{\Sigma})$ with the property that $f(0) \neq 0$, we have
	\begin{equation*}
		\lowDs\cA[f](0^{+}) < 0 \, .
	\end{equation*}
	In particular, this shows that $\Sigma$ is unstable.
\end{theorem}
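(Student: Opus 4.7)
The plan is to extract the desired strict inequality directly from formula \eqref{eq:d+A} of Theorem \ref{thm:d+}, by showing that when $n=2$ its right-hand side equals $-\infty$ as soon as $f(0)\neq 0$.

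Specializing \eqref{eq:d+A} to $n=2$, the boundary integral is taken over $\R^{n-1}=\R$ and reads $\lambda\int_{\R} f(x_1,\lambda|x_1|)^2/|x_1|\,dx_1$. I would first note that the Dirichlet-type term $\int_{\Sigma}|\nabla f|^2\,dx$ is finite, since $f$ is Lipschitz with compact support in $\overline{\Sigma}$. Thus everything reduces to showing that the boundary integral diverges to $+\infty$.

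For this, observe that the integrand $f(x_1,\lambda|x_1|)^2/|x_1|$ carries a non-integrable $1/|x_1|$ singularity at the origin unless $f(0)$ vanishes. More precisely, since $f$ is Lipschitz continuous and $f(0,0)=f(0)\neq 0$ by hypothesis, there exists $\delta>0$ such that $f(x_1,\lambda|x_1|)^2 \geq f(0)^2/4$ for all $|x_1|<\delta$; hence the integrand is bounded below by $f(0)^2/(4|x_1|)$ on $(-\delta,\delta)$, whose integral at the origin diverges. Plugging this into \eqref{eq:d+A} yields $\lowDs\cA[f](0^+)=-\infty$, which is in particular strictly negative. Combined with the second-order necessary condition \eqref{eq:SONC}, this forces $\Sigma$ to be unstable.

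The only subtle point, which I would briefly address, is that Theorem \ref{thm:d+} is stated as an identity between real numbers, and one should make sure that its proof carries over in the extended-real sense when the boundary integral is $+\infty$. Inspecting the argument, the decomposition $\lowDs\cA[f](0^+)=\lim_{s\to 0^+}(\cI^1_s+\cI^2_s)$ is unaffected, the Dominated Convergence step giving $\cI^1_s\to 0$ is unchanged, and the monotonicity underlying the Beppo Levi step for $\cI^2_s$ still yields the (now infinite) pointwise limit $\tfrac{1}{2}\int_{\R}g/|x_1|\,dx_1=+\infty$. This produces $\cI^2_s\to-\infty$ and concludes the argument. I do not foresee any real obstacle: the heart of the proof is simply the elementary observation that $1/|x_1|$ is not integrable at the origin, so in dimension $n=2$ the competition between the interior Dirichlet term and the boundary term in \eqref{eq:d+A} is always won by the latter whenever $f$ does not vanish at the vertex.
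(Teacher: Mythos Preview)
Your proposal is correct and follows exactly the same approach as the paper: specialize \eqref{eq:d+A} to $n=2$, observe that the Dirichlet term is finite while the boundary integral $\int_{\R} f(x_1,\lambda|x_1|)^2/|x_1|\,dx_1$ diverges whenever $f(0)\neq 0$, and conclude. If anything, your version is more careful than the paper's, which simply asserts that the boundary integral is finite only if $f(0)=0$; you supply the explicit lower bound near the origin and you flag (and resolve) the extended-real reading of Theorem~\ref{thm:d+}, a point the paper leaves implicit.
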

\begin{proof}
	When $n=2$ the inequality \eqref{eq:funcineq} becomes
	\begin{equation} \label{eq:d+A3}
		\int_{\Sigma} |\nabla f(x)|^2 \, dx \geq \lambda \int_{\R} \dfrac{f(x_1,\lambda|x_1|)^2}{|x_1|} \, d x_1 \, .
	\end{equation}
	We now observe that, for all $f \in C^{0,1}_c(\overline{\Sigma})$, the left-hand side of \eqref{eq:d+A3} is always finite while the right-hand side is finite only if $f(0) = 0$. Thus \eqref{eq:d+A3} fails for all $f \in C^{0,1}_c(\overline{\Sigma})$ such that $f(0) \neq 0$.
\end{proof}

\subsection{The case $\dim \Sigma \geq 3$}
In this case, the aperture of the cone $\Omega_{\lambda}$ plays a role. Indeed, we can prove that there exists a threshold aperture parameter $\lambda^{*}>0$, such that for larger apertures (i.e., for $0<\lambda\le \lambda^{*}$), the hyperplane $\Sigma$ becomes strictly stable. 
\begin{theorem} \label{thm:stab4}
	Let $n \geq 3$. Then there exists $\lambda^{\ast} = \lambda^\ast(n) > 0$ such that, if $0<\lambda \leq \lambda^\ast$, we have
	\begin{equation} \label{eq:d+4}
		\lowDs \cA[f](0^{+}) \geq 0 \qquad \text{for all $f \in  \, C^{0,1}_c(\overline{\Sigma})$}
	\end{equation}
	and the inequality is strict whenever $f$ is not identically zero, which means that $\Sigma$ is strictly stable.
\end{theorem}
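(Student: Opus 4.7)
By Theorem \ref{thm:d+}, the first variation vanishes identically, so the stability condition $\lowDs \cA[f](0^+) \ge 0$ is equivalent to the functional inequality \eqref{eq:funcineq}. My approach is to straighten the cone $\Sigma$ into the half-space $\R^n_+ := \{y_n > 0\}$ via a bi-Lipschitz map with unit Jacobian, and then apply Lemma \ref{lem:Kato} on $\R^n_+$.

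Concretely, define $T: \Sigma \to \R^n_+$ by $T(x', x_n) := (x', x_n - \lambda|x'|)$, and set $v := f \circ T^{-1} \in C^{0,1}_c(\overline{\R^n_+})$, so that $v(y', 0) = f(y', \lambda|y'|)$. For $x \in \Sigma$ with $x' \neq 0$ (a set of full measure) where $f$ is differentiable, the chain rule gives $\de_i f = \de_i v - \lambda \hat{x}'_i \de_n v$ for $i \leq n-1$ and $\de_n f = \de_n v$, with $\hat{x}' := x'/|x'|$; hence
\[
|\nabla f|^2 = |\nabla' v|^2 - 2\lambda (\de_n v)(\hat{x}' \cdot \nabla' v) + (1+\lambda^2)(\de_n v)^2.
\]
The quadratic form on the right has $n-2$ eigenvalues equal to $1$ (from directions orthogonal to $\hat{x}'$ in $\R^{n-1}$) and a $2 \times 2$ block in the $(\hat{x}', e_n)$-plane with trace $2 + \lambda^2$ and determinant $1$, whose smallest eigenvalue is
\[
\tau(\lambda) := \frac{2}{2 + \lambda^2 + \lambda\sqrt{\lambda^2 + 4}},
\]
continuous and strictly decreasing in $\lambda \ge 0$ with $\tau(0) = 1$. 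Since $|\det DT| = 1$, we obtain
\[
\int_\Sigma |\nabla f|^2 \, dx \;\ge\; \tau(\lambda) \int_{\R^n_+} |\nabla v|^2 \, dy, \qquad \lambda \int_{\R^{n-1}} \frac{f(x', \lambda|x'|)^2}{|x'|} \, dx' \;=\; \lambda \int_{\R^{n-1}} \frac{v(y', 0)^2}{|y'|} \, dy'.
\]

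Applying Lemma \ref{lem:Kato} on the half-space --- which provides a constant $K_n > 0$ depending only on $n$ such that $\int_{\R^{n-1}} v(y', 0)^2/|y'| \, dy' \le K_n \int_{\R^n_+} |\nabla v|^2 \, dy$ for all $v \in C^{0,1}_c(\overline{\R^n_+})$ --- we deduce $\int_\Sigma |\nabla f|^2 \, dx \ge (\tau(\lambda)/K_n) \int_{\R^{n-1}} v(y', 0)^2/|y'| \, dy'$. Therefore \eqref{eq:funcineq} holds whenever $\tau(\lambda) \ge \lambda K_n$. Since $\tau(0)/K_n = 1/K_n > 0$ and $\tau$ is continuous, the set $\{\lambda > 0 : \tau(\lambda) > \lambda K_n\}$ is an interval $(0, \bar{\lambda})$ with $\bar{\lambda} > 0$; choosing $\lambda^*(n) \in (0, \bar{\lambda})$ gives $\tau(\lambda) > \lambda K_n$ strictly for all $\lambda \in (0, \lambda^*(n)]$. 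Under this strict bound, if $f \not\equiv 0$ then $v \not\equiv 0$ and, being compactly supported, necessarily non-constant, so $\int_{\R^n_+} |\nabla v|^2 \, dy > 0$; this makes the chain of inequalities strict and proves strict stability.

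The main technical hurdle is the Kato inequality itself (Lemma \ref{lem:Kato}), a Hardy-type trace inequality with the singular weight $|y'|^{-1}$ on the flat boundary of $\R^n_+$. It is classically derived by passing to the trace space $\dot H^{1/2}(\R^{n-1})$ via the minimizing harmonic extension and then invoking the Herbst--Kato inequality $\int u^2/|y'| \, dy' \le K_n \|u\|_{\dot H^{1/2}(\R^{n-1})}^2$, whose finiteness requires $1/2 < (n-1)/2$, i.e., $n \ge 3$, precisely matching the dimensional restriction in the theorem. A secondary subtlety is that $T$ fails to be differentiable along the axis $\{x' = 0\}$, but this set has zero $n$-dimensional measure and does not affect the integral identities.
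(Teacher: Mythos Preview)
Your argument is correct and follows essentially the same route as the paper: straighten $\Sigma$ to the half-space by the unit-Jacobian shear $(x',x_n)\mapsto(x',x_n-\lambda|x'|)$ and apply the half-space Kato trace inequality. The only differences are cosmetic---your eigenvalue bound $\tau(\lambda)$ is sharper than the paper's $(1+\lambda)^{-2}$, and you handle strict positivity more explicitly; just be aware that in the paper Lemma~\ref{lem:Kato} is already stated on $\Sigma$ (so what you call ``Lemma~\ref{lem:Kato} on the half-space'' is the auxiliary inequality~\eqref{eq:Katohalfspace} inside its proof) and that the paper's constant $K_n$ is the reciprocal of your convention.
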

To prove Theorem \ref{thm:stab4}, we need the following result.
\begin{lemma} \label{lem:Kato}
	Let $n \geq 3$. Then
	\begin{equation} \label{eq:Kato}
		\int_{\Sigma} |\nabla f(x)|^2 \, dx \geq \dfrac{K_{n}}{(1 + \lambda)^2} \int_{\R^{n-1}} \dfrac{f(x',\lambda|x'|)^2}{|x'|} \, dx' \, , \qquad \text{for all $f \in H^1(\Sigma)$,}
	\end{equation}
	where $K_{n} = 2 \Gamma^{2}(n/4) \Gamma^{-2}((n - 2)/4)$ and $\Gamma(u)$ is Euler's Gamma function.
\end{lemma}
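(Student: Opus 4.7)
The plan is to straighten the conical boundary of $\Sigma$ via a volume-preserving bi-Lipschitz change of variables, thereby reducing \eqref{eq:Kato} to a classical Kato/Herbst trace inequality on the standard half-space $\R^n_+$, where the sharp constant $K_n$ stems from Herbst's inequality on $\R^{n-1}$ \cite{Herbst1977}. Concretely, I consider the map $\Psi:\overline{\Sigma}\to\overline{\R^n_+}$ defined by $\Psi(x',x_n):=(x',x_n-\lambda|x'|)$, with unit Jacobian, which sends $\partial\Sigma$ onto $\{y_n=0\}$. Setting $g:=f\circ\Psi^{-1}$, the boundary identity $g(y',0)=f(y',\lambda|y'|)$ allows rewriting the right-hand side of \eqref{eq:Kato} as $K_n(1+\lambda)^{-2}\int_{\R^{n-1}}g(y',0)^2/|y'|\,dy'$. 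A chain-rule computation gives $\nabla g=M\nabla f$ where $M-I_n$ is rank one, and a short spectral analysis of $M^TM$ delivers the pointwise bound $|\nabla g|^2\le(1+\lambda)^2|\nabla f\circ\Psi^{-1}|^2$ a.e.; after changing variables this yields
\[
\int_{\Sigma}|\nabla f|^2\,dx\ \ge\ \frac{1}{(1+\lambda)^2}\int_{\R^n_+}|\nabla g|^2\,dy.
\]

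It then suffices to prove the half-space trace inequality $\int_{\R^n_+}|\nabla g|^2\,dy\ge K_n\int_{\R^{n-1}}g(y',0)^2/|y'|\,dy'$ for $n\ge 3$. Setting $u:=g(\cdot,0)$ and denoting by $U$ its harmonic extension to $\R^n_+$, the Dirichlet-minimization property of $U$ gives $\int_{\R^n_+}|\nabla g|^2\ge\int_{\R^n_+}|\nabla U|^2=\|u\|_{\dot H^{1/2}(\R^{n-1})}^2$. Herbst's sharp inequality on $\R^{n-1}$, applicable because $n-1\ge 2$, then yields $\|u\|_{\dot H^{1/2}(\R^{n-1})}^2\ge K_n\int_{\R^{n-1}}u^2/|x'|\,dx'$ with the precise constant $K_n=2\Gamma^2(n/4)/\Gamma^2((n-2)/4)$, closing the chain. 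The density of Lipschitz compactly-supported functions in $H^1(\Sigma)$ extends the inequality to the full class of competitors.

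The main obstacle is the invocation of Herbst's sharp inequality in the final step, which is both the classical source of the explicit form of $K_n$ and the origin of the dimensional threshold $n\ge 3$ (since Herbst's bound on $\R^N$ requires $N\ge 2$, consistent with the instability of $\Sigma$ when $n=2$ shown in Theorem \ref{thm:d+<0}). The factor $(1+\lambda)^2$ arising from the change of variables is not sharp, and a finer spectral bound for $M^TM$ would in principle enlarge the admissible range of $\lambda$ in Theorem \ref{thm:stab4}; however it already matches precisely the form of the stated inequality.
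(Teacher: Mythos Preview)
Your proof is correct and follows essentially the same route as the paper: both straighten $\Sigma$ to $\R^n_+$ via the unit-Jacobian shear $(x',x_n)\mapsto(x',x_n\mp\lambda|x'|)$, establish the pointwise bound $|\nabla g|^2\le(1+\lambda)^2|\nabla f|^2$, and then apply the sharp half-space trace inequality with constant $K_n$. The only difference is that the paper quotes this last inequality directly from \cite{DavilaDupaigneMontenegro2008}, while you supply a short derivation of it via harmonic extension and Herbst's inequality on $\R^{n-1}$ (a route the paper also acknowledges as the original source).
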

Inequality \eqref{eq:Kato} is known as \emph{Kato's Inequality}. A proof of a stronger version of \eqref{eq:Kato}, valid when $\Sigma$ is a half-space (i.e. for $\lambda = 0$), is provided in \cite[Theorem 1.4]{DavilaDupaigneMontenegro2008}. Although our domain $\Sigma$ is not a half-space, our version directly follows from inequality (9) in the same work. We also mention that a former proof of Kato's Inequality was given by Herbst in \cite{Herbst1977}.
\begin{proof}[Proof of Lemma \ref{lem:Kato}]
	Let us denote by $\R^n_+$ the half-space of points $x \in \R^n$ with $x_{n} > 0$. Inequality (9) of \cite{DavilaDupaigneMontenegro2008} states that
	\begin{equation} \label{eq:Katohalfspace}
		\int_{\R^{n}_+} |\nabla g(x)|^2 \, dx \geq K_{n} \, \int_{\R^{n-1}} \dfrac{g(x',0)^2}{|x'|} \, dx' \, , \qquad \text{for all $g \in H^1(\R^n_+)$.}
	\end{equation}
	Given $f \in H^1(\Sigma)$, let us set
	\begin{equation*}
		T(x) := (x', x_{n} + \lambda |x'|),\qquad g(x) := f(T(x))\,.
	\end{equation*}
	We have $g \in H^1(\R^n_+)$. Since $g(x',0) = f(x',\lambda |x'|)$, we get
	\begin{equation} \label{eq:boundaryterm}
		\int_{\R^{n-1}} \dfrac{g(x',0)^2}{|x'|} \, dx' = \int_{\R^{n-1}} \dfrac{f(x',\lambda |x'|)^2}{|x'|} \, dx' \, .
	\end{equation}
	We observe that
	\begin{equation*}
		\nabla g(x) = \nabla f (T(x)) + \lambda \de_{n} f(T(x)) \left( \dfrac{x'}{|x'|}, 0 \right) \, ,
	\end{equation*}
	and thus
	\begin{equation} \label{eq:estnablag}
		|\nabla g (x)|^2 \leq (1 + \lambda)^2 |\nabla f(T(x))|^2 \, .
	\end{equation}
	Combining \eqref{eq:Katohalfspace}, \eqref{eq:boundaryterm} and \eqref{eq:estnablag}, and noting that $\det DT \equiv 1$, by the change of variable formula we get
	\begin{align*}
		(1 + \lambda)^2 \int_{\Sigma} |\nabla f(x)|^2 dx & = (1 + \lambda)^2 \int_{\R^n_+} |\nabla f(T(x))|^2 dx \\
		& \geq \int_{\R^n_+} |\nabla g(x)|^2 dx \\
		& \geq K_{n} \int_{\R^{n-1}} \dfrac{f(x',\lambda |x'|)^2}{|x'|} \, d x' 
	\end{align*}
	and conclude the proof.
\end{proof}

\begin{proof}[Proof of Theorem \ref{thm:stab4}]
	By Lemma \ref{lem:Kato}, the validity of \eqref{eq:funcineq}, for all $f \in C^{0,1}_{c}(\overline{\Sigma})$, is guaranteed by the condition
	\begin{equation}\label{eq:conditionKlambda}
		0<\lambda \leq \dfrac{K_{n}}{(1 + \lambda)^2} \,.
	\end{equation}
	Since the function $\lambda \mapsto \lambda(1 + \lambda)^2$ is monotonically increasing from $0$ to $+\infty$, there exists a unique $\lambda^\ast > 0$ such that
	\begin{equation*}
		\lambda^\ast(1 + \lambda^\ast)^2 = K_{n} 
	\end{equation*}
	and, consequently, \eqref{eq:conditionKlambda} is satisfied if and only if $0<\lambda \le \lambda^{*}$, as wanted.
\end{proof}

\subsection*{Data Availability Statement}
Data sharing not applicable to this article as no datasets were generated or analyzed during the current study.
\subsection*{Conflict of interest}
The authors declare that they have no financial interests or conflicts of interest related to the subject matter or materials discussed in this paper.


\end{document}